\newtheorem{thm}{Theorem}[section]
\newtheorem{lem}[thm]{Lemma}
\newtheorem{question}[thm]{Question}
\definecolor{darkred}{rgb}{1, 0.1, 0.3}
\definecolor{darkblue}{rgb}{0.1, 0.1, 1}
\definecolor{darkgreen}{rgb}{0,0.6,0.5}
\newcommand {\mm}[1] {\ifmmode{#1}\else{\mbox{\(#1\)}}\fi}
\def\thanks#1{\protected@xdef\@thanks{\@thanks
        \protect\footnotetext{#1}}}
\begin{document}

\title{Minimum saturated graphs for unions of cliques
\thanks{$^{*}$ Corresponding author.
\newline\indent\indent\indent\it\small Email address: zhenhe@bjtu.edu.cn.}
}
\author{Wen-Han Zhu, Rong-Xia Hao and Zhen He$^{*}$
\\
{\it\small  School of Mathematics and Statistics, Beijing Jiaotong University, Beijing 100044, P.R. China.}
}

\date{}

\maketitle

\begin{abstract}
Let $H$ be a fixed graph. A graph $G$ is called {\it $H$-saturated} if $H$ is not a subgraph of $G$
but the addition of any missing edge to $G$ results in an $H$-subgraph.
The {\it saturation number} of $H$, denoted $sat(n,H)$,
is the minimum number of edges over all $H$-saturated graphs of order $n$,
and $Sat(n,H)$ denote the family of $H$-saturated graphs with $sat(n,H)$ edges and $n$ vertices.
In this paper, we resolve a conjecture of Chen and Yuan in[Discrete Math. 347(2024)113868] by determining $Sat(n,K_p\cup (t-1)K_q)$ for every $2\le p\le q$ and $t\ge 2$.

{\noindent\bf Keywords}: Minimum saturated graph; Complete graph; Disjoint union of graphs
\end{abstract}

\section{Introduction}

{\noindent\bf Notation.}  Let $G$ be a graph with vertex set $V(G)$ of order $n$ and edge set $E(G)$.
For any vertex $x\in V(G)$, the vertex subset $\{y\in V(G): xy\in E(G)\}$
is the {\it neighborhood} of $x$ in $G$, denoted by $N_{G}(x)$.
Let $S$ be a vertex subset of $G$, denoted by $N_{S}(x)=N_{G}(x)\cap S$ for any vertex $x\in V(G)$.
Let $N_{G}[x]=N_{G}(x)\cup \{x\}$ be the \emph {closed neighborhood} of $x$.
The {\it degree} of a vertex $x$ in $G$, denoted by $d_{G}(x)$, is the number of edges of
$G$ incident with $x$.
Let $\delta(G)$ and $\Delta(G)$ denote the minimum and maximum degrees of the vertices of $G$.
For $V^{\prime}\subseteq V(G)$,
$G[V^{\prime}]$ is said to be {\it the subgraph of $G$ induced by $V^{\prime}$}
whose vertex set is $V^{\prime}$ and whose edge set consists of all edges of $G$
with both ends in $V^{\prime}$.
For any $S\subseteq V(G)$, denoted by $\overline{S}=V(G)\backslash S$.
Let $A$ and $B$ be subset of $V(G)$ such that $A\cap B=\emptyset$.
Then denoted by $E_{G}[A,B]$ the set of edges of $G$ such that the two vertices incident to
any edge are respectively in $A$ and $B$.

Let $G$ and $H$ be any two graphs with $V(G)\cap V(H)=\emptyset$.
The {\it union} $G\cup H$  of the graphs $G$ and $H$ has $V(G\cup H)=V(G)\cup V(H)$
and $E(G\cup H)=E(G)\cup E(H)$.
Specially, $tH$ is the union of $t$ pairwise disjoint copies of $H$.
The {\it join} of graphs $G$ and $H$, denoted by $G\vee H$, is the graph with $V(G\vee H)=V(G)\cup V(H)$
and $E(G\vee H)=E(G)\cup E(H)\cup \{xy|x\in V(G),y\in V(H)\}$.
Denote a complete graph and an independent set of order $n$ by $K_{n}$ and $I_{n}$.
 We use $[m]$ to denote the set $\{1,2,\dots,m\}$.

For any two graphs $G$ and $H$, we say that $G$ is {\it $H$-free} if there is no subgraph isomorphic to $H$ in $G$.
A graph $G$ is called {\it $H$-saturated} if it is $H$-free, but for any edge $uv\in E(\overline{G})$, the graph $G+uv$ contains a
subgraph isomorphic to $H$, where $\overline{G}$ denotes the complement of $G$.
Let $sat(n,H)=$~min~$\{|E(G)|:|V(G)|=n$ and $G$ is $H$-saturated$\}$
and $Sat(n,H)=\{G:|V(G)|=n,|E(G)|=sat(n,H),$ and $G$ is $H$-saturated$\}$.
We will call that $sat(n,H)$ is the {\it saturation number} of the graph $H$.
Each member of $Sat(n,H)$ is a {\it minimum saturated graph}.

\vskip.1cm
{\noindent\bf Background.} Saturation numbers were first studied by Erd\H{o}s, Hajnal, and Moon \cite{A.Hajnal}, the authors determined $sat(n,K_p)=\binom{n}{2}-\binom{n-p+2}{2}$ and proved that $K_{p-2}\vee I_{n-p+2}$ is the unique minimum $K_p$-saturated graph. 
Since then, the saturation numbers have been determined for various classes of graphs.
K\'{a}szonyi and Tuza \cite{ZsTuza} gave a general upper bound $sat(n,\mathcal{H})=O(n)$ for any family $\mathcal{H}$ of graphs and determined $sat(n,H)$ for $H\in \{K_{1,k},tK_{2},P_{k}\}$.
Saturation number for cycles was studied in \cite{Ollmann,YCChen,YCChen2,ZFuredi,Tuza,YLan}.
Saturation number for complete partite graph was studied in \cite{YCChen3,Pikhurko,Huang,Bohman}.
Vertex-disjoint graphs saturation problem was studied in \cite{ZsTuza,Faudree,FanChen,Cao,ZLv}. See \cite{Currie} for an abundant survey.

There are very few graphs for which $sat(n,H)$ and $Sat(n,H)$ are known exactly. 
In this paper we will be interested in the setting when the forbidden graph are vertex-disjoint cliques. 
In 1986, K\'{a}szonyi and Tuza \cite{ZsTuza} determined $sat(n,tK_2)$ and gave a complete characterization of the minimum $tK_2$-saturated graphs.
In 2007, Faudree, Ferrara, Gould and Jacobson\cite{Faudree} determined the exact value of $sat(n,tK_{p})$ with $t\geq 1,p\geq 3$ and large $n$. 
Meanwhile, they determined $Sat(n,K_{p}\cup K_{q})=\{ K_{p-2}\vee(K_{q+1}\cup I_{n-p-q+1})\}$ with $q\geq p$ and $Sat(n,3K_{p})=\{ K_{p-2}\vee (2K_{p+1}\cup I_{n-3p})\}$. Furthermore, they asked if $Sat(n,tK_{p})=\{K_{p-2}\vee ((t-1)K_{p+1}\cup I_{n-tp-t+3})\}$ holds for all $t\ge 4$.
\begin{question}\cite{Faudree}\label{ques1}
    Is it true that for $t\ge 4$ we have $Sat(n,tK_{p})=\{K_{p-2}\vee ((t-1)K_{p+1}\cup I_{n-tp-t+3})\}$ when $n$ is large enough?
\end{question}
In 2024, Chen and Yuan\cite{FanChen} determined $sat(n,K_p\cup (t-1)K_q)$ for $t\ge 3$ and $2<p\le q$ and $sat(n,K_p\cup K_q\cup K_r)$ for $r\ge p+q$. 
Meanwhile, they determined $Sat(n,K_p\cup K_q\cup K_r)$ for $r\ge p+q$ and conjectured that $Sat(n,K_{p}\cup (t-1)K_q)=\{K_{p-2}\vee ((t-1)K_{q+1}\cup I_{n-t+3-p-(t-1)q})\}$ with $2\le p \le q$, $n>(t-1)(q+1)^2+3(p-2)$ and $t\ge 4$.
In this paper, we prove that this conjecture indeed holds, particularly, set $p=q$, we answer Question \ref{ques1}. Our main result is Theorem \ref{thm1}.
\begin{thm}\label{thm1}
    Suppose $2\le p \le q$, $n>q(q+1)(t-1)+3(p-2)$ and $t\ge 2$. $Sat(n,K_{p}\cup (t-1)K_q)=\{K_{p-2}\vee ((t-1)K_{q+1}\cup I_{n-t+3-p-q(t-1)})\}$.
\end{thm}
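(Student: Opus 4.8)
The plan is to prove the two containments separately. Write $H=K_{p}\cup(t-1)K_{q}$ and set $G^{*}=K_{p-2}\vee((t-1)K_{q+1}\cup I_{m})$ with $m=n-t+3-p-q(t-1)$; call its $K_{p-2}$ the \emph{core} $C$, its $(q+1)$-cliques the \emph{blocks} $B_{1},\dots,B_{t-1}$, and $I_{m}$ the \emph{reservoir}. I would first verify that $G^{*}$ is $H$-saturated, giving $sat(n,H)\le|E(G^{*})|$. For $H$-freeness, note that every clique of $G^{*}$ lies in $C\cup B_{i}$ for a single $i$ (or in $C$ together with one reservoir vertex), hence has order at most $p+q-1$; since the two smallest cliques of $H$ have total order $p+q$, no block can host two of the $t$ pairwise disjoint cliques of an embedded $H$, and as every $H$-clique has order $\ge p>p-1$ it needs a block, forcing $t$ distinct blocks where only $t-1$ exist. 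Saturation follows from a short case check on a non-edge $uv$: if $u,v$ both lie in the reservoir then $C\cup\{u,v\}$ is a new $K_{p}$ while the blocks supply $t-1$ disjoint $K_{q}$'s; if $u$ lies in a block (and $v$ in the reservoir or another block) then $C\cup\{u,v\}$ is again a $K_{p}$, and each block retains $q$ vertices after losing at most one endpoint, so the blocks still supply $t-1$ disjoint $K_{q}$'s. This is exactly where block order $q+1$ rather than $q$ is needed.

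For the reverse containment, let $G$ be $H$-saturated with $|E(G)|\le|E(G^{*})|$; the first goal is to recover the three-part structure of $G^{*}$. A short argument gives $\delta(G)\ge p-2$: a non-neighbour $u$ of any vertex $w$ yields an $H$-copy in $G+uw$ whose clique through $uw$ has order $\ge p$, so $w$ keeps at least $p-2$ neighbours in $G$. A key step, using edge-minimality, is to show $\delta(G)=p-2$ and then to exploit a vertex $v$ of degree exactly $p-2$: for every non-neighbour $u$ of $v$, the clique through $uv$ in the $H$-copy of $G+uv$ has order $\ge p$ and all of its vertices except $u$ lie in $N_{G}(v)$, which forces that clique to equal $\{u,v\}\cup N_{G}(v)$, a $K_{p}$. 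Thus $N_{G}(v)$ is a $(p-2)$-clique and every vertex outside $N[v]$ is adjacent to all of $N_{G}(v)$; combined with the clique on $N_{G}(v)$ this makes $C:=N_{G}(v)$ a set of $p-2$ universal vertices. Every degree-$(p-2)$ vertex then has neighbourhood exactly $C$, so these vertices form an independent reservoir $R$ attached only to $C$, and we obtain $G=K_{p-2}\vee(F\cup I_{|R|})$ with $F=G-C-R$ having no isolated vertex. Since $|E(G)|=\binom{p-2}{2}+(p-2)(n-p+2)+e(F)$, minimising $|E(G)|$ reduces to minimising $e(F)$, which also pins $|R|=m$.

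The heart is the exact determination of $F$. Applying saturation to a non-edge $uv$ with $u\in V(F)$ and $v\in R$, the completing clique is again forced to be $C\cup\{u,v\}$, and deleting the used vertex $u$ shows that $F-u$ must contain $t-1$ disjoint $K_{q}$'s \emph{for every} $u\in V(F)$. Meanwhile $H$-freeness forbids $t$ disjoint $K_{q}$'s in $F$ (with $q\ge p$ they would already give $H$), so $F$ has $K_{q}$-packing number exactly $t-1$ and is \emph{robust}: no single vertex deletion lowers it. The remaining, and I expect hardest, task is the extremal lemma -- that over all such $F$ (robust, packing number $t-1$, no isolated vertex, with $K_{p-2}\vee(F\cup I_{m})$ still $H$-free) the quantity $e(F)$ is minimised \emph{uniquely} by $(t-1)K_{q+1}$, on $(t-1)(q+1)$ vertices with $(t-1)\binom{q+1}{2}$ edges. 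The subtlety is that genuinely $H$-free and robust competitors exist -- most notably the single clique $K_{(t-1)q+1}$, and partially merged or oversized blocks -- so each must be beaten on the edge count (the single clique, for instance, has strictly more edges than $(t-1)K_{q+1}$ once $t\ge 3$) or shown to recreate $H$; the hypothesis $n>q(q+1)(t-1)+3(p-2)$ is what keeps the reservoir large enough that the needed deletions and completions are always available. Finally the degenerate cases $p=2$ (empty core, $R$ a set of isolated vertices) and $t=2$ (one block, so the inter-block completion is vacuous and the single-clique competitor coincides with $K_{q+1}$) need separate handling; once $F\cong(t-1)K_{q+1}$ and $|R|=m$ are established, matching $|E(G^{*})|$ exactly yields $G\cong G^{*}$.
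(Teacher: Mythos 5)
Your setup matches the paper's reduction: both arguments fix a minimum-degree vertex $v$ with $N_G(v)=S$ a universal $(p-2)$-clique, observe that every vertex outside $S\cup\{v\}$ is joined to all of $S$, and reduce the problem to pinning down the structure of the part of $G$ that carries the $(t-1)$ disjoint $q$-cliques. Your observations that the completing clique for a non-edge at $v$ must be exactly $S\cup\{u,v\}$, and that $F-u$ must contain $(t-1)$ disjoint $K_q$'s for every $u$, are correct and are used in the paper (the latter is essentially Lemma \ref{lem3}).

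However, the proposal has a genuine gap at exactly the point you flag as ``the remaining, and I expect hardest, task'': you never prove the extremal lemma that $e(F)$ is uniquely minimised by $(t-1)K_{q+1}$, and that lemma \emph{is} the theorem --- everything before it was already known from \cite{FanChen,Faudree}. Your proposed framing of it as an optimisation over ``robust'' graphs of $K_q$-packing number $t-1$ is also too weak on its own: for instance, $t-1$ disjoint copies of $K_q$ each with one pendant vertex has only $(t-1)\bigl(\binom{q}{2}+1\bigr)$ edges, far fewer than $(t-1)\binom{q+1}{2}$, and is only excluded by a finer analysis of which neighbours a vertex of $R_F$ must have. The paper does this work by an exact edge count: fixing one copy $F\cong(t-1)K_q$, minimality forces the number of non-$F$ edges inside $\overline{S}$ to be exactly $(t-1)q$, whence each component of $G[R_F\cup V(F)]-E(F)$ satisfies $|E(C_i)|=|V(C_i)\cap V(F)|$ (Equation (1)); Claims 1--4 then leverage this equality, together with repeated re-selection of the $(t-1)K_q$ (replacing $F_i$ by $(F_i\cup\{r\})\setminus\{x\}$) and the saturation property applied to carefully chosen non-edges, to show each $r\in R_F$ satisfies $N_F(r)=V(F_i)$ for a single $i$ and finally that $G[V(F)\cup R_F]\cong(t-1)K_{q+1}$. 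None of this machinery appears in your proposal, so the argument as written does not establish uniqueness; it only reduces the theorem to an unproved (and nontrivially constrained) extremal statement.
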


The paper consists of three Section. In Section 2, some notations and important properties of $K_{p}\cup (t-1)K_{q}$-saturated graph are introduced.
In Section 3,  the unique $K_{p}\cup (t-1)K_{q}$-saturated graph of minimum size is determined.

\section{Some properties of $K_{p}\cup (t-1)K_{q}$-saturated graph}

Let $t\geq 1$, $2\le p\le q$ and $n\geq p+(t-1)q+t-3$ be fixed integers. Let


$$H(n,p,q,t)\cong K_{p-2}\vee ((t-1)K_{q+1}\cup I_{n-t+3-p-q(t-1)}).$$

In \cite{FanChen,Faudree}, the authors completely determined the saturation number of $K_{p}\cup (t-1)K_{q}$ for $t\geq 1$ and $2\leq p\leq q$ and proved that $H(n,p,q,t)\in Sat(n,K_{p}\cup (t-1)K_{q})$.

\begin{lem}\label{lem1}\cite{FanChen,Faudree}
Let $2\leq p\leq q, t\geq 2$ and $n>q(q+1)(t-1)+3(p-2)$ be integers.
Then 
$$sat(n,K_{p}\cup (t-1)K_{q})=(p-2)(n-p+2)+(t-1)\binom{q+1}{2}+\binom{p-2}{2}.$$
\end{lem}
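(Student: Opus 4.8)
The plan is to prove the two bounds separately; the upper bound is witnessed by the graph $H = H(n,p,q,t)$ introduced above, so the first task is to verify that $H$ is $K_{p}\cup (t-1)K_{q}$-saturated and then simply count its edges. To see that $H$ is $K_{p}\cup (t-1)K_{q}$-free, observe that every clique of $H$ lies inside $V(K_{p-2})\cup B$ for a single block $B$ (one of the $t-1$ copies of $K_{q+1}$), or inside $V(K_{p-2})$ together with pairwise non-adjacent vertices, since two vertices in distinct blocks, or in the independent set, are non-adjacent unless one of them belongs to $K_{p-2}$. As $p\le q$ we have $p-1<q$, so a clique of the second type has order at most $p-1<q$ and every $K_{q}$ of an embedded copy must meet one of the $t-1$ blocks; because $2q>p+q-1$, no single block can host two disjoint $K_{q}$'s, so the $t-1$ copies of $K_{q}$ exhaust all $t-1$ blocks. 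The remaining $K_{p}$ is then forced into $V(K_{p-2})$ together with vertices that are pairwise non-adjacent, giving a clique of order at most $p-1<p$, a contradiction. Counting edges, $H$ has $\binom{p-2}{2}$ edges inside $K_{p-2}$, exactly $(p-2)(n-p+2)$ edges from $K_{p-2}$ to the remaining $n-p+2$ vertices, and $(t-1)\binom{q+1}{2}$ edges inside the blocks, matching the claimed value.

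Next I would confirm saturation. A non-edge of $H$ joins two independent vertices, an independent vertex to a block vertex, or two vertices in distinct blocks; write its endpoints as $u,v$. In each case $V(K_{p-2})\cup\{u,v\}$ becomes a $K_{p}$ once the edge is added, since $u,v$ are each joined to all of $K_{p-2}$. Deleting from each block the at most one endpoint of $uv$ it contains still leaves a $K_{q}$ in every block, and these $t-1$ cliques are pairwise disjoint and disjoint from the new $K_{p}$. Hence $H+uv \supseteq K_{p}\cup (t-1)K_{q}$, so $H$ is saturated and $sat(n,K_{p}\cup (t-1)K_{q})\le \binom{p-2}{2}+(p-2)(n-p+2)+(t-1)\binom{q+1}{2}$.

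The substance of the lemma is the matching lower bound for an arbitrary $K_{p}\cup (t-1)K_{q}$-saturated graph $G$. I would start from the minimum-degree estimate $\delta(G)\ge p-2$: a universal vertex trivially satisfies this, and for a vertex $v$ with a non-neighbour $w$ the edge $vw$ lies in exactly one clique of an embedded copy of $K_{p}\cup (t-1)K_{q}$ in $G+vw$; that clique has order $p$ or $q\ge p$, so $v$ has at least $p-2$ neighbours in $G$ inside it. This alone yields only $|E(G)|\ge (p-2)n/2$, roughly half of what is needed, so the real work is to account for the exact terms. Using that $n$ is large, I would locate a set $D$ of $p-2$ vertices adjacent to (almost) all of $V(G)$, which supplies the $\binom{p-2}{2}+(p-2)(n-p+2)$ edges, together with $t-1$ pairwise disjoint cliques on about $q+1$ vertices in $G-D$, which supply the extra $(t-1)\binom{q+1}{2}$. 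Their existence is forced by saturation: adding any edge produces a copy of $K_{p}\cup (t-1)K_{q}$, so $G$ already contains $t-1$ disjoint large cliques together with a $K_{p}$ missing one edge, and the vertices playing the role of the spine must be common to essentially all such copies and therefore of near-full degree. The case $t=1$ is exactly the Erd\H{o}s--Hajnal--Moon theorem $sat(n,K_{p})=\binom{p-2}{2}+(p-2)(n-p+2)$, which serves as the template for isolating the spine.

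The hard part is this lower bound, and specifically showing that the two contributions cannot be traded off against one another: one must exclude sparse saturated graphs in which the $p-2$ would-be universal vertices and the dense clique blocks share vertices or degree in a way that undercuts the sum. This is precisely where the hypothesis $n>q(q+1)(t-1)+3(p-2)$ is used, since it forces the spine to be genuinely universal and the blocks to be genuinely disjoint from it; turning the easy minimum-degree bound into the sharp count by making this structural dichotomy rigorous is the crux of the argument.
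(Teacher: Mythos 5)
This lemma is quoted in the paper from the cited references (Chen--Yuan and Faudree--Ferrara--Gould--Jacobson); the paper itself gives no proof, so the only meaningful comparison is with what a complete proof would require. Your upper bound is essentially complete and correct: the verification that $H(n,p,q,t)$ is $K_{p}\cup(t-1)K_{q}$-free (each of the $t$ required cliques needs at least two vertices outside the $K_{p-2}$ spine since $q\ge p$, those vertices must lie in a single $K_{q+1}$ block, and two disjoint cliques of total order at least $p+q$ cannot share a block of order $q+1$ plus at most $p-2$ spine vertices), the saturation check, and the edge count all go through as you describe.

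The lower bound, however, is not a proof but a declaration of intent, and it is the entire substance of the lemma. You correctly observe that $\delta(G)\ge p-2$ only yields $|E(G)|\ge (p-2)n/2$, and then say you ``would locate'' a set $D$ of $p-2$ near-universal vertices plus $t-1$ disjoint cliques of order about $q+1$ whose edge sets are essentially disjoint from the $(p-2)(n-p+2)$ spine edges --- but you give no argument for why such a $D$ exists, why the dense blocks must have $q+1$ (rather than $q$) vertices each, or why the two contributions cannot overlap. Your closing sentence concedes that making this rigorous ``is the crux of the argument.'' For comparison, the actual argument (as reflected in the machinery this paper builds on top of the lemma) fixes a minimum-degree vertex $v$ with $S=N(v)$, $|S|=p-2$, shows $N(v)\subseteq N(u)$ for every non-neighbour $u$ of $v$ (which is what forces the spine to be universal), extracts a copy $F$ of $(t-1)K_q$ in $G\setminus S$, proves $E(G[\overline{V(F)\cup S}])=\emptyset$, and then bounds the residual edge set $A_F$ from below by $(t-1)q$ via a component-by-component analysis of $G[R_F\cup V(F)]-E(F)$ (Lemma \ref{lem5} excluding components that are trees with no attachment in $R_F$). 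None of this appears in your proposal, so as written it establishes only $sat(n,K_p\cup(t-1)K_q)\le (p-2)(n-p+2)+(t-1)\binom{q+1}{2}+\binom{p-2}{2}$, not the equality claimed.
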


Next, we list some properties of a graph $G$ with $G\in Sat(n,K_{p}\cup (t-1)K_{q})$ and establish some new properties.

\begin{lem}\label{lem2}\cite{FanChen}
Suppose $2\le p \le q$, $n>q(q+1)(t-1)+3(p-2)$ and $t\ge 2$.
Let $G\in Sat(n,K_{p}\cup (t-1)K_{q})$. Then $\delta(G)=p-2$.
\end{lem}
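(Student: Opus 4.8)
The plan is to establish the two inequalities $\delta(G)\ge p-2$ and $\delta(G)\le p-2$ separately. The first holds for \emph{every} $K_p\cup(t-1)K_q$-saturated graph, while the second is where the minimality $|E(G)|=sat(n,K_p\cup(t-1)K_q)$ (Lemma \ref{lem1}) must be used; indeed a pure degree count cannot suffice, since for $p\ge 3$ the average degree $\approx 2(p-2)$ already exceeds $p-1$, so a minimum degree of $p-1$ is consistent with the edge total and only the saturation structure can forbid it.

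For the lower bound, let $v$ be a vertex with $d_G(v)=\delta(G)$. Since $n$ is large, $G\ne K_n$ (as $K_n$ contains $K_p\cup(t-1)K_q$), so $\delta(G)\le n-2$ and $v$ has a non-neighbour $u$. Saturation gives a copy of $K_p\cup(t-1)K_q$ in $G+uv$, and $H$-freeness forces this copy to contain the new edge $uv$. Hence $u$ and $v$ lie in a common clique $C$ of the copy, where $C$ is either the $K_p$ or one of the $K_q$'s, so $|C|\ge p$. All $|C|-1\ge p-1$ neighbours of $v$ inside $C$ lie in $G+uv$, exactly one of them ($u$) via the new edge; therefore $v$ has at least $p-2$ neighbours in $G$, giving $\delta(G)\ge p-2$.

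For the upper bound I would reduce everything to the claim that \emph{there exists a vertex $v$ such that $G-v$ is again $K_p\cup(t-1)K_q$-saturated}. Granting this, $G-v$ is a saturated graph on $n-1$ vertices, so $|E(G-v)|\ge sat(n-1,K_p\cup(t-1)K_q)$ by definition of $sat$; since the formula in Lemma \ref{lem1} (valid at $n-1$ for $n$ large) gives $sat(n,\cdot)-sat(n-1,\cdot)=(p-2)$, we obtain $d_G(v)=|E(G)|-|E(G-v)|\le p-2$, whence $\delta(G)\le d_G(v)\le p-2$. Combined with the lower bound this yields $\delta(G)=p-2$.

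Proving the claim is the main obstacle. The danger is a \emph{critical} non-edge $xy$ of $G-v$ for which every copy of $K_p\cup(t-1)K_q$ in $G+xy$ is forced to pass through $v$, so that deleting $v$ destroys saturation. To rule this out I would use two facts. First, $G$ contains at most $t-1$ pairwise disjoint copies of $K_q$: otherwise, since $p\le q$, one could take $t-1$ of them as the $(t-1)K_q$ and carve a disjoint $K_p$ out of a further one, forcing $K_p\cup(t-1)K_q\subseteq G$ and contradicting $H$-freeness. Thus the dense part of $G$ that carries the critical copies is confined to a set of bounded size. Second, since $|E(G)|=sat(n,\cdot)=O(n)$ and $n$ is large, $G$ has many low-degree vertices outside this bounded dense part; for such a vertex $v$ one expects that any copy of $H$ certifying a non-edge $xy$ can be rerouted through the dense core so as to avoid $v$, keeping $G-v$ saturated. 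Making "confined'' and the rerouting precise — localizing where critical copies must live and showing a safely deletable vertex always exists — is the technical heart; the borderline case in which $v$ has degree near $p-1$ with $N_G(v)$ a clique must be handled separately, and it is there that the minimality of $|E(G)|$ is invoked to reach a contradiction.
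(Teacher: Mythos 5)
Your lower bound argument ($\delta(G)\ge p-2$) is correct and complete: since $G$ is $K_p\cup(t-1)K_q$-free and $n$ is large, $G\neq K_n$, so a minimum-degree vertex $v$ has a non-neighbour $u$; the copy of $K_p\cup(t-1)K_q$ in $G+uv$ must use the edge $uv$, placing $u$ and $v$ in a common clique of order at least $\min(p,q)=p$, and the $|C|-2\ge p-2$ remaining vertices of that clique are neighbours of $v$ already present in $G$. You are also right that a naive edge count cannot give the upper bound for $p\ge 3$. (Note that the paper itself does not prove this lemma; it imports it from the cited reference, so there is no in-paper argument to measure you against, only the statement.)

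The genuine gap is the upper bound $\delta(G)\le p-2$, which is the entire content of the lemma beyond the easy direction, and which you do not prove. Your reduction to the claim ``there exists $v$ with $G-v$ again $K_p\cup(t-1)K_q$-saturated'' would indeed yield $d_G(v)\le sat(n,\cdot)-sat(n-1,\cdot)=p-2$, but the claim itself is left entirely open: you name the obstruction (a non-edge $xy$ of $G-v$ all of whose witnessing copies pass through $v$) and then only gesture at ``rerouting through the dense core,'' explicitly conceding that making this precise is ``the technical heart.'' The two facts you do establish (at most $t-1$ disjoint copies of $K_q$ in $G$, and the existence of many low-degree vertices) do not by themselves localize the witnessing copies: a low-degree vertex $v$ far from any fixed $(t-1)K_q$ can still be the unique vertex completing the $K_p$ part of every copy certifying some non-edge, and nothing you say rules this out. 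Moreover, choosing a deletable $v$ seems to presuppose structural information about $G$ of roughly the same depth as what the lemma is meant to provide, so as written the proposal does not constitute a proof of $\delta(G)=p-2$; it proves only $\delta(G)\ge p-2$ plus a conditional reduction.
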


Let $v$ be a vertex of minimum degree in $G$ and $S=N_G(v)$.
As $G\in Sat(n,K_{p}\cup (t-1)K_{q})$, 
by Lemma \ref{lem2}, we have $\delta(G)=d_{G}(v)=|S|=p-2$.
Notice that $G$ is $K_{p}\cup(t-1)K_{q}$-saturated.
For any vertex $u\in \overline{S\cup\{v\}}$,
there is a subgraph isomorphic to $K_{p}\cup(t-1)K_{q}$ in $G+uv$ such that $uv$ lies in $K_{p}$.
As $|S|=p-2$, $u$ is adjacent to any vertex of $S$.
It implies that $G[S\cap N_{G}(u)]=G[S]\cong K_{p-2}$ and $N_{G}(v)\subseteq N_{G}(u)$ for any $u\not\in N_{G}(v)$.
Since $G[S\cup \{u,v\}]$ is a clique containing $uv$ in $G+uv$,
there are $(t-1)$ disjoint copies of $K_{q}$ in $G\backslash (S\cup \{u,v\})$.
Let $\mathcal{F}_{uv}$ be the family of subgraphs that isomorphic to $(t-1)K_q$ in $G\backslash (S\cup \{u,v\})$. Then the following lemma holds.

\begin{lem}\label{lem3}
For any $u\in \overline{S\cup \{v\}}$
and $F\in \mathcal{F}_{uv}$,
$N_{G}(u)\backslash S \subseteq V(F)$.
\end{lem}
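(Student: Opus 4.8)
The plan is to argue by contradiction. Suppose the containment fails, so there is a vertex $w\in (N_{G}(u)\backslash S)\backslash V(F)$. The strategy is to use this stray neighbor $w$ to upgrade the clique $K_{p-1}$ on $S\cup\{u\}$ (which exists because $u$ is adjacent to all of $S$ and $G[S]\cong K_{p-2}$) into a full copy of $K_p$ that is vertex-disjoint from $F$. Combined with the $(t-1)K_q$ supplied by $F$, this would produce a copy of $K_{p}\cup (t-1)K_{q}$ lying entirely inside $G$, contradicting the hypothesis that $G$ is $K_{p}\cup (t-1)K_{q}$-free.

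First I would pin down where $w$ lives. Since $u\notin N_{G}(v)$ we have $uv\notin E(G)$, hence $v\notin N_{G}(u)$; as $w\in N_{G}(u)$ this forces $w\neq v$, and of course $w\neq u$. Together with $w\notin S$ this shows $w\in\overline{S\cup\{v\}}$, so $w$ is exactly the kind of vertex for which the property established just before the lemma applies: every vertex outside $N_{G}(v)\cup\{v\}$ satisfies $N_{G}(v)\subseteq N_{G}(w)$, i.e. $S\subseteq N_{G}(w)$. Now I would examine $G[S\cup\{u,w\}]$: it has $p$ vertices, $G[S]\cong K_{p-2}$, both $u$ and $w$ are adjacent to all of $S$, and $u$ is adjacent to $w$ because $w\in N_{G}(u)$. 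Hence $G[S\cup\{u,w\}]\cong K_{p}$.

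It remains to check disjointness and conclude. By definition $V(F)\subseteq V(G)\backslash(S\cup\{u,v\})$, so $F$ avoids $S$ and $u$; and $w\notin V(F)$ by the choice of $w$. Thus the copy of $K_{p}$ on $S\cup\{u,w\}$ is vertex-disjoint from the $(t-1)K_{q}$ given by $F$, yielding a $K_{p}\cup (t-1)K_{q}$ subgraph of $G$ and the desired contradiction. I expect the only delicate point to be the bookkeeping that certifies $w$ as a legitimate vertex of $\overline{S\cup\{v\}}$ (the step $w\neq v$, via $uv\notin E(G)$), since this is precisely what licenses the application of the inherited property $S\subseteq N_{G}(w)$; everything after that is a routine disjointness check.
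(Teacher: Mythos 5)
Your proposal is correct and follows essentially the same route as the paper: the paper's proof is the one-line observation that $F\cup G[S\cup\{u,w\}]$ would be a copy of $K_{p}\cup(t-1)K_{q}$ in $G$, relying on the facts (established just before the lemma) that $G[S]\cong K_{p-2}$ and that every vertex outside $S\cup\{v\}$ is adjacent to all of $S$. Your version merely makes explicit the bookkeeping the paper leaves implicit (that $w\neq v$ and that $S\subseteq N_{G}(w)$), which is a fair completeness check but not a different argument.
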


\begin{proof}
Assume that there exists a vertex $w\in N_{G}(u)\backslash S$ and 
$w\not\in V(F)$. Then $F\cup G[S\cup\{u,w\}]$ is a subgraph isomorphic to $K_{p}\cup (t-1)K_{q}$ in $G$, a contradiction.
\end{proof}

\begin{lem}\label{lem4}
Let $H$ be a connected graph, 
$X$ be a vertex subset of $H$.
Then $|E(H)|-|E(H[X])|\geq |V(H)\backslash X|$.
\end{lem}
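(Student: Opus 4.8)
The plan is to rewrite the left-hand side as a count of the edges touching the complement of $X$, and then bound that count from below using the connectivity of $H$. Write $Y=V(H)\backslash X$, so that the quantity $|V(H)\backslash X|$ on the right is simply $|Y|$. Every edge of $H$ that is not counted in $E(H[X])$ has at least one endpoint in $Y$; splitting these edges according to whether they have one or both endpoints in $Y$ gives the identity
$$|E(H)|-|E(H[X])| = |E(H[Y])| + |E_H[X,Y]|.$$
It therefore suffices to show that these two terms together are at least $|Y|$.

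First I would analyze the induced subgraph $H[Y]$. Let $c$ denote the number of connected components of $H[Y]$, with vertex sets $C_1,\dots,C_c$. Since each component $C_i$ is connected, it contains at least $|C_i|-1$ edges, and summing over $i$ yields $|E(H[Y])|\geq |Y|-c$. Next I would use the global connectivity of $H$ to control $|E_H[X,Y]|$: I claim each component $C_i$ sends at least one edge into $X$. Indeed, if some $C_i$ had no neighbor in $X$, then by maximality of $C_i$ inside $H[Y]$ it would have no neighbor outside $C_i$ at all, making $C_i$ a connected component of the whole graph $H$; as $H$ is connected and $X$ is nonempty, this is impossible. Consequently $|E_H[X,Y]|\geq c$. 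Adding the two bounds gives $|E(H[Y])| + |E_H[X,Y]| \geq (|Y|-c)+c = |Y|$, as required.

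The only delicate point is the role of the hypothesis $X\neq\emptyset$, which is what makes the per-component edge-to-$X$ charging available; without it the statement is false (a tree on $n$ vertices with $X=\emptyset$ gives $n-1<n$). Since the lemma will only ever be invoked with $X$ a nonempty set containing the minimum-degree neighborhood $S$, this assumption is harmless, and I would state it explicitly at the outset. The crux of the argument is the balancing act in the final addition: the $-c$ deficit incurred by allowing $H[Y]$ to split into several components is exactly compensated by the $+c$ contribution forced by connectivity, so the bound is tight precisely when $H[Y]$ is a forest each of whose components is attached to $X$ by a single edge.
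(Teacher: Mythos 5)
Your proof is correct, but it takes a different route from the paper's. The paper contracts $X$ to a single vertex, observes that the contracted (multi)graph $H'$ is still connected with $|E(H')|=|E(H)|-|E(H[X])|$ and $|V(H')|=|V(H)\backslash X|+1$, and applies the bound $|E(H')|\geq |V(H')|-1$; the whole argument is two lines. You instead split the edges outside $H[X]$ into $E(H[Y])$ and $E_H[X,Y]$ with $Y=V(H)\backslash X$, bound the first by $|Y|-c$ via the $c$ components of $H[Y]$, and recover the deficit $c$ by charging each component at least one edge into $X$ using the connectivity of $H$. The two arguments are really the same inequality seen before and after contraction (your per-component spanning trees plus one $X$--$Y$ edge each assemble into a spanning tree of $H'$), but yours is more explicit and has the merit of isolating exactly where the hypothesis $X\neq\emptyset$ enters: without it the charging step fails and the lemma is false for a tree. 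The paper's contraction proof needs the same hypothesis implicitly (contracting an empty set does not create a new vertex), and the lemma is only ever applied with $X$ nonempty, so your added hypothesis is harmless; your closing remark that $X$ contains $S$ in the applications is not accurate (the lemma is applied inside components of $G[R_F\cup V(F)]-E(F)$, disjoint from $S$), but this is a side comment that does not affect the proof.
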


\begin{proof}
Let $H^{'}$ be the graph obtained by contracting $X$ into a vertex in $H$, then $|E(H^{'})|=|E(H)|-|E(H[X])|$ and $|V(H^{'})|=|V(H)\backslash X|+1$. Since $H^{'}$ is connected, $|E(H)|-|E(H[X])|=|E(H^{'})|\ge |V(H^{'})|-1=|V(H)\backslash X|$.
\end{proof}






\section{Proof of Theorem \ref{thm1}}

\noindent{\it Proof.}
The case for $q=2$ was completely resolved in \cite{ZsTuza}, hence we only need study the case with $q\ge 3$.
Let $G$ be a graph of order $n$ satisfying that $G\in Sat(n,K_{p}\cup (t-1)K_{q})$ and $G\not\cong H(n,p,q,t)$.
Let $v$ be a vertex with minimum degree of $G$ and $S=N_{G}(v)$. Let $F$ be any subgraph isomorphic to $(t-1)K_{q}$ in $G\backslash S$ and  $F=F_{1}\cup F_{2}\cup\dots\cup F_{t-1}$ with $F_{i}\cong K_{q}$ for any $i\in [t-1]$.
Then $E(G[\overline{V(F)\cup S}])=\emptyset$.
Otherwise, suppose that $xy\in E(G[\overline{V(F)\cup S}])$, then $G[S\cup \{x,y\}]\cup F$ is a subgraph isomorphic to
$K_{p}\cup(t-1)K_{q}$ in $G$, which contradicts $G$ being $K_{p}\cup(t-1)K_{q}$-saturated. Let $R_{F}=\{w|w\in \overline{V(F)\cup S}$ and $w$ has at least one neighbor in $F\}$ and
$A_{F}=\{xy|xy\not\in E(F)$ and $x,y\in \overline{S}\}$.
In fact, each edge of $A_{F}$ incident to at least one vertex in $F$ as $E(G[\overline{V(F)\cup S}])=\emptyset$.
Then
\begin{eqnarray}    \label{eq}
|E(G)|&= &|E(G[S])|+|E(F)|+|E_{G}[S,\overline{S}]| +|A_F| \nonumber    \\
&=&\binom{p-2}{2}
+(t-1)\binom{q}{2}
+(p-2)(n-p+2)+|A_{F}|,\nonumber
\end{eqnarray}
By $G\in Sat(n,K_{p}\cup(t-1)K_{q})$ and Lemma \ref{lem1} , $|A_{F}|=(t-1)q$.

Assume that there is $u\in V(F_i)$ such that $N(u)=S\cup V(F_i)$ for $i\in [t-1]$. Let $w$ be any other vertex in $V(F_i)$. Then $G+vw$ contains a subgraph isomorphic to $K_{p}\cup(t-1)K_{q}$ in which $G[S\cup \{v,w\}]$ is a copy of $K_p$ in $G+vw$, hence $u$ cannot lie in this subgraph, a contradiction to Lemma \ref{lem3}. Therefore, we can assume that every vertex $u$ in $F$ has a neighbor $u^{\prime}$ that lies in $R_F$ or $F$ such that $uu^{\prime}$ is not in $E(F)$.

Let $C_{1},C_{2},\dots,C_{h}$ be the components of $G[R_{F}\cup V(F)]-E(F)$,
where $h\geq 1$.
For any $i\in [h]$, the component $C_{i}$ falls into three categories:
there is a cycle in $C_{i}$;
$C_{i}$ is a tree and there exists at least one edge in $C_{i}$ which is incident with a vertex in $R_{F}$;
$C_{i}$ is a tree and any vertex in $C_{i}$ has no neighbour in $R_{F}$.
According to the following lemma, we only need to consider the first two types.

\begin{lem}\label{lem5}\cite{Faudree}
There is no component $C$ in $G[R_{F}\cup V(F)]-E(F)$ such that $C$ is a tree and any vertex of $C$ has no neighbour in $R_{F}$.
\end{lem}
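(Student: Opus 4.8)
The plan is to argue by contradiction: suppose a component $C$ of $G[R_F\cup V(F)]-E(F)$ is a tree none of whose vertices has a neighbour in $R_F$. First I would record the shape of $C$. An edge inside a single clique $F_i$ lies in $E(F)$ and is deleted, so every edge of $C$ joins two distinct cliques. If some vertex of $C$ lay in $R_F$, its (necessarily $V(F)$-)neighbour inside $C$ would then have an $R_F$-neighbour, contradicting the hypothesis; hence $V(C)\subseteq V(F)$ and all edges of $C$ are inter-clique. Since every vertex of $F$ has an external neighbour (as noted before Lemma \ref{lem5}) and that neighbour cannot be in $R_F$ here, each vertex of $C$ has a $C$-neighbour, so $C$ has at least two vertices and therefore a leaf $\ell$, with unique $C$-neighbour $\ell'$; say $\ell\in V(F_a)$ and $\ell'\in V(F_b)$ with $a\ne b$. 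Because $\ell$ lies in no other edge of the graph and has no $R_F$-neighbour, its neighbourhood is pinned down exactly as $N_G(\ell)=S\cup(V(F_a)\setminus\{\ell\})\cup\{\ell'\}$. Note that $\ell,\ell'\in\overline{S}$ and are adjacent, so both are joined to all of $S$; as $S$ is a clique, $G[S\cup\{\ell,\ell'\}]\cong K_p$.

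The target contradiction is therefore to produce a copy of $K_{p}\cup(t-1)K_{q}$ inside $G$ itself, which is forbidden: it suffices to find $(t-1)K_q$ in $G-(S\cup\{\ell,\ell'\})$ and adjoin the clique $G[S\cup\{\ell,\ell'\}]$. To build it I would invoke saturation at the non-edge $v\ell$. As in the discussion preceding Lemma \ref{lem3}, $G+v\ell$ contains $K_p\cup(t-1)K_q$ with $K_p=G[S\cup\{v,\ell\}]$, leaving a copy $F'\in\mathcal{F}_{v\ell}$ of $(t-1)K_q$ in $G-(S\cup\{v,\ell\})$. Lemma \ref{lem3} forces $N_G(\ell)\setminus S=(V(F_a)\setminus\{\ell\})\cup\{\ell'\}\subseteq V(F')$; since $V(F_a)\setminus\{\ell\}$ is a clique on $q-1$ vertices it must lie inside a single $K_q$ of $F'$, which thus contains an extra vertex $w\notin V(F_a)\cup S\cup\{v\}$ adjacent to all of $V(F_a)\setminus\{\ell\}$. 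So $(V(F_a)\setminus\{\ell\})\cup\{w\}$ is a $K_q$ avoiding $\ell$. Running the symmetric argument at $v\ell'$ produces a reconstructor $w'$ completing $V(F_b)\setminus\{\ell'\}$ to a $K_q$ avoiding $\ell'$.

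The crux is then to splice these reconstructions, together with the $t-3$ untouched cliques $F_c$ ($c\ne a,b$), into a single copy of $(t-1)K_q$ that avoids \emph{both} $\ell$ and $\ell'$ and is internally disjoint; this yields the forbidden subgraph and the contradiction. The decisive leverage is exactly the defining feature of the excluded type: because no vertex of $C$ touches $R_F$, the breaking of the two cliques $F_a,F_b$ by the inter-clique edge $\ell\ell'$ consumes none of the $R_F$-reservoir, so the vertices needed to re-complete $V(F_a)\setminus\{\ell\}$ and $V(F_b)\setminus\{\ell'\}$ remain available — in sharp contrast to a type-$(2)$ component, where the reconstructor would itself be the $R_F$-vertex that the edge consumes. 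The hard part will be the disjointness bookkeeping: I must choose $w,w'$ and the kept cliques pairwise disjoint, ruling out that a reconstructor coincides with a vertex already used. This is where the hypothesis $n>q(q+1)(t-1)+3(p-2)$ supplies enough room and where Lemma \ref{lem4}, bounding from below the number of edges each component must spend, controls how many vertices are forced into $V(F)$ versus available in $R_F$; the exact edge count $|A_F|=(t-1)q$ from Lemma \ref{lem1} then pins the reservoir size precisely enough to carry the choice through.
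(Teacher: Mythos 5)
The paper does not actually prove Lemma \ref{lem5} --- it is imported wholesale from \cite{Faudree} --- so there is no in-text proof to compare against; I can only judge your argument on its own terms. Your opening is sound: a component $C$ of the excluded type has $V(C)\subseteq V(F)$, the paragraph preceding the lemma forces it to have at least two vertices and hence a leaf, and for a leaf $\ell\in V(F_a)$ with unique $C$-neighbour $\ell'\in V(F_b)$ you correctly pin down $N_G(\ell)=S\cup(V(F_a)\setminus\{\ell\})\cup\{\ell'\}$ and, via saturation at $v\ell$ and Lemma \ref{lem3}, obtain a copy $F'$ of $(t-1)K_q$ in $G-(S\cup\{v,\ell\})$ containing $(V(F_a)\setminus\{\ell\})\cup\{\ell'\}$. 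This is the right entry point.

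There are, however, genuine gaps after that. First, the assertion that $V(F_a)\setminus\{\ell\}$, ``being a clique on $q-1$ vertices, must lie inside a single $K_q$ of $F'$'' is an invalid inference: $F'$ is one particular packing of disjoint $K_q$'s, and a clique of $G$ contained in $V(F')$ can perfectly well be split across several of its components (each $u\in V(F_a)\setminus\{\ell\}$ could be completed to its own $K_q$ by its other $A_F$-neighbours). If this containment is true it needs a proof, and it is not cosmetic --- the existence of your reconstructor $w$ depends on it. Second, the ``symmetric argument at $v\ell'$'' is not symmetric: $\ell'$ need not be a leaf of $C$, so $N_G(\ell')$ is not determined and the corresponding $F''$ is forced to contain all of $N_C(\ell')$ (including $\ell$), not just $V(F_b)\setminus\{\ell'\}$. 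Third, and most importantly, the entire contradiction rests on splicing the two reconstructed cliques and the $t-3$ untouched $F_c$'s into a $(t-1)K_q$ disjoint from $S\cup\{\ell,\ell'\}$, and you explicitly leave that disjointness argument undone: $w$ could lie in some $V(F_c)$, or in $V(F_b)\setminus\{\ell'\}$, or coincide with $w'$. The appeal to $n$ being large and to Lemma \ref{lem4} cannot rescue this, since all of the relevant vertices live in the bounded set $V(F)\cup R_F$ and the abundance of isolated-type vertices (whose only neighbours are in $S$) is of no help in completing a $K_q$. As it stands the proposal is a reasonable plan whose decisive steps are missing.
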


For any $i\in [h]$, 
if $C_{i}$ is a tree,
by Lemma \ref{lem5}, 
$|E(C_{i})|=|V(C_{i})|-1= |V(C_{i})\cap V(F)|+|V(C_{i})\cap R_{F}|-1\geq |V(C_{i})\cap V(F)|$.
If $C_{i}$ contains a cycle,
then $|E(C_{i})|\geq |V(C_{i})|\geq |V(C_{i})\cap V(F)|$.
Therefore, $|E(C_{i})|\geq|V(C_{i})\cap V(F)|$
for any $i\in [h]$.
It implies that $\sum_{i=1}^{h}|E(C_{i})|\geq \sum_{i=1}^{h}|V(C_{i})\cap V(F)|=(t-1)q$.
Notice that $\sum_{i=1}^{h}|E(C_{i})|=|A_{F}|=(t-1)q$.
Then $\sum_{i=1}^{h}|E(C_{i})|=\sum_{i=1}^{h}|V(C_{i})\cap V(F)|$, and so 

\begin{eqnarray}
|E(C_{i})|&=&|V(C_{i})\cap V(F)| ~\text{for any}~i\in [h].
\end{eqnarray}

\noindent{\bf Claim~1.} {\it $(i)$ $|V(C_i)\cap R_F|\le 1$ for any $i\in [h]$.

 $(ii)$ Let $x,y$ and $r$ be any three distinct vertices of $G$ such that $x\in V(F_{i}),y\in V(F_{j})$ and $r\in R_{F}$
for some $i,j\in [t-1]$ and $i\neq j$.
If $xr,yr\in E(G)$, then $xy\not\in E(G)$.

 $(iii)$ For $q\ge 4$, let $X$ be a vertex subset of $F$ and $G[X]\cong K_{q}$. Then there exists $i_1, i_2\in [h]$ such that $V(X) \subseteq V(F_{i_1})\cup V(F_{i_2})$.}

\begin{proof}
$(i)$ If  $|V(C_i)\cap R_F|\ge 2$ for some $i\in [h]$, then $|E(C_{i})|\ge |V(C_{i})|-1\ge |V(C_{i})\cap V(F)|+1$, a contradiction to Equation (1).

$(ii)$ Assume for a contradiction that $xy\in E(G)$, then 
$x,y$ and $r$ in the same component of $C_{i}$ for some $i\in [h]$.
Without loss of generality, let $x,y,r\in V(C_{1})$
and denoted by $X=\{x,y,r\}$
By Lemma \ref{lem4},
$|E(C_{1})|\geq|E(C_{1}[X])|+|V(C_{1})\backslash X|=3+|V(C_{1})|-3\geq|V(C_{1})\cap V(F)|+1,$
a contradiction to Equation (1).

$(iii)$ Suppose there are $i_1,i_2,i_3\in [h]$ such that $a_j=|V(X)\cap V(F_{i_j})| \neq 0$ for $j\in [3]$. Then 
\begin{eqnarray}    \label{eq}
|E(G[X])\setminus E(F)|-|X|&\geq & a_1a_2+a_1(q-a_1-a_2)+a_2(q-a_1-a_2)-q \nonumber    \\
&=&a_1a_2-1+(a_1+a_2-1)(q-a_1-a_2-1).  \nonumber 
\end{eqnarray}
Since $q\ge 4$, we have $a_1a_2-1>0$ or $q-a_1-a_2-1>0$. Thus $|E(G[X])\setminus E(F)|-|X|>0$. Let $C_1$ be the component contains $X$, then by Lemma \ref{lem4} we have $|E(C_1)|\ge |E(C_1[X])|+|V(C_1)\setminus X|>|X|+|V(C_1)\setminus X|=|V(C_1)|\ge |V(C_1)\cap V(F)|$, a contradiction to Equation (1).
\end{proof}

\noindent{\bf Claim~2.} {\it Let $r\in R_{F}$ such that $|N(r)\cap V(F_{i})|\ge 2$ for some $i\in [t-1]$.
Then $V(F_{i})\subseteq N(r)$.}

\begin{proof}
Suppose $x_1,x_2\in V(F_1)\cap N(r)$, let $F_{vx_1}\in \mathcal{F}_{vx_1}$ be $t-1$ copies of $K_q$ in $G\setminus(S\cup \{v,x_1\})$. By Lemma \ref{lem3} and $\{r,x_2\}\subseteq N(x_1)$, we have $\{r,x_2\}\subseteq V(F_{vx_1})$. If $r,x_2$ are in the same copy of $K_q$ in $F_{vx_1}$, then by Claim 1(ii), $G[\{r\}\cup V(F_1)\setminus\{x_1\}]$ forms a copy of $K_q$ in $F_{vx_1}$. Thus $V(F_{i})\subseteq N(r)$. If $r,x_2$ are in different copies of $K_q$ in $F_{vx_1}$. $F_{vx_1}$ is a subgraph isomorphic to $(t-1)K_q$ in $G\setminus S$, $x_1\in R_{F_{vx_1}}$ and $r,x_2$ are in different copies of $K_q$ in $F_{vx_1}$, a contradiction to Claim 1(ii) and we are done. 
\end{proof}


\noindent{\bf Claim~3.} {\it $|N(r)\cap  V(F_i)|\neq 1$ for any $r\in R_{F}$ and $i\in [t-1]$.}

\begin{proof}
Suppose $|N(r)\cap  V(F_i)|= 1$ for some $i\in [t-1]$, say $|N(r)\cap  V(F_1)|= 1$. Let $\{f_1\}=N(r)\cap  V(F_1)$ and $F_{vf_1}\in \mathcal{F}_{vf_1}$ be $t-1$ copies of $K_q$ in $G\setminus (S\cup \{v,f_1\})$. Then by Lemma \ref{lem3}, $r$ is in a copy of $K_q$ in $F_{vf_1}$. By Claim 1(ii), $r$ together with $q-1$ vertices in $V(F_{i})(i\neq 1)$, say $i=2$, form a copy of $K_q$ in $F_{vf_1}$, let $f_2$ be the vertex in $V(F_{2})$ but not in the $K_q$ containing $r$ in $F_{vf_1}$. If $f_2\notin F_{vf_1}$, then $(F_{vf_1}\setminus (V(F_2)\cup \{r\}))\cup F_2\cup G[S\cup\{v,f_{1}\}]$ is a subgraph isomorphic to $K_p\cup(t-1)K_q$ in $G$, a contradiction. Thus $f_2\in F_{vf_1}$. Let $F^{\prime}=(F\setminus F_2)\cup G[(V(F_2)\cup\{r\})\setminus \{f_2\}]$. Then $F^{'}$ is a subgraph isomorphic to $(t-1)K_q$ in $G\setminus S$. By Claim 1(ii), $f_2$ together with $q-1$ vertices in $V(F_{i})(i\neq 2)$ form a copy of $K_q$ in $F_{vf_1}$. If $i\neq 1$, say $i=3$, let $f_3$ be the vertex in $V(F_{3})$ but not in the $K_q$ containing $f_2$ in $F_{vf_1}$. By Claim 2, $f_2f_3\in E(G)$. Then we continue this iterative process until $f_j$ together with $q-1$ vertices in $V(F_{1})\setminus \{f_1\}$ form a copy of $K_q$ in $F_{vf_1}$ (see Figure \ref{Fig.2}). Then $\{r,f_2,f_3,...,f_j,f_1\}$ contains a cycle in $G[R_F\cup V(F)]-E(F)$, a contradiction to Equation (1).
\end{proof}

\begin{figure}[htbp]
\begin{center}
\scalebox{0.5}[0.5]{\includegraphics{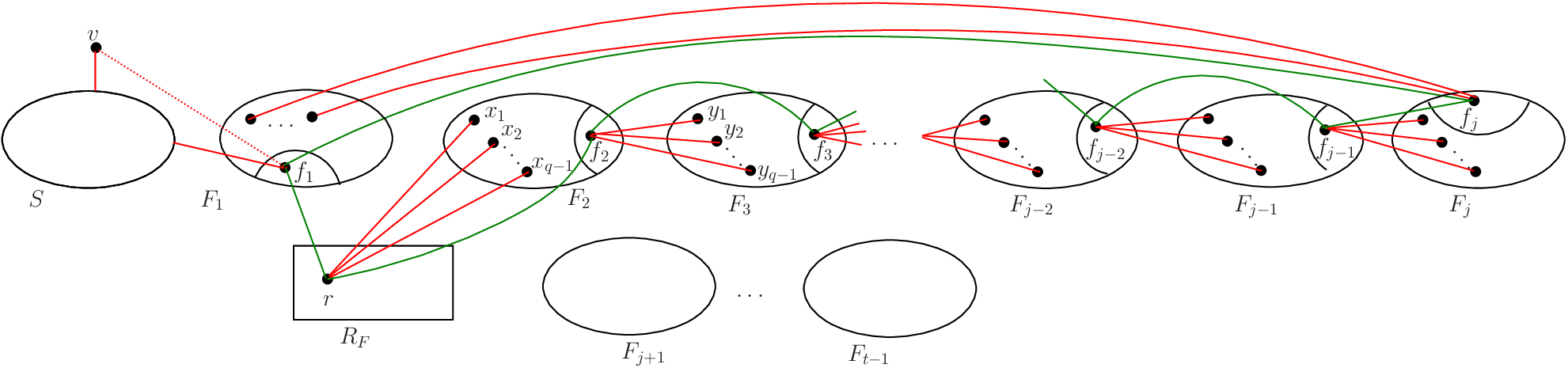}}
\caption{Illustration for Claim 3 }\label{Fig.2}
\end{center}
\end{figure}

\noindent{\bf Claim~4.} {\it For any $r\in R_{F}$, $N_{F}(r)=V(F_{i})$ for some $i\in [t-1]$.}

\begin{proof}
Suppose that there are $i,j\in [t-1]$ such that $N_{F}(r)\cap V(F_{i})\neq \emptyset$ and $N_{F}(r)\cap V(F_{j})\neq \emptyset$, say $i=1,j=2$. By Claims 2 and 3, $V(F_{1})\cup V(F_{2})\subseteq N_{F}(r)$. If there exists a vertex $\alpha\in V(F_{k})$ for some $k\in \{3,4,\dots,t-1\}$ such that $r\alpha\in E(G)$, then $r$ is adjacent to all vertices in $V(F_{k})$.

Let $a$ be an integer with $2\leq a\leq t-1$. Without loss of generality, assume that $|E_{G}[r,V(F_{i})]|=q$ for any $i\in [a]$ and $E_{G}[r,V(F_{j})]=\emptyset$ for any $j\in \{a+1,\dots,t-1\}$. 
By Claim 1 $(i)$, $E_{G}[\bigcup_{i=1}^{a}V(F_{i}),$
$R_{F}\backslash \{r\}]=\emptyset$.
If there exist some $i\in[a]$ and $j\in \{a+1,\dots,t-1\}$
such that $E_{G}[V(F_{i}),V(F_{j})]\neq \emptyset$,
without loss of generality, assume that $x_{1}y_{1}\in E(G)$ with $x_{1}\in V(F_{1})$
and $y_{1}\in V(F_{a+1})$.
Let $M_{1}=G[\{r\}\cup V(F_{1})\backslash \{x_{1}\}]$.
Clearly, $M_{1}\cong K_{q}$.
Let $F^{\prime}=M_{1}\cup (\bigcup_{i=2}^{t-1}F_{i})$
and $R_{F^{\prime}}=\{x_{1}\}\cup R_{F}\backslash\{r\}$.
Then $F^{\prime}\cong (t-1)K_{q}$ and $E(G[R_{F^{\prime}}])=\emptyset$.
Then by Claims 2 and 3, $x_{1}$ is adjacent to all vertices in $F_{a+1}$.
By Claim 1 $(i)$, $E_{G}[V(F_{a+1}),R_{F^{\prime}}\backslash \{x_{1}\}]=\emptyset$.
That is $E_{G}[V(F_{a+1}),R_{F}]=\emptyset$.

Repeat the above process, without loss of generality,
 we obtain a maximum connected graph $G_{1}$ in $G\setminus S$
satisfying that $G[\{r\}\cup (\bigcup_{i=1}^{b}V(F_{i}))]$ is a subgraph of $G_{1}$,
and $E_{G}[V(G_{1}),\bigcup_{j=b+1}^{t-1}V(F_{j})]=\emptyset$, where $b$ is an integer with $a\leq b\leq t-1$
(see Figure \ref{Fig.3}).
We can also get $E_{G}[V(F_{j}),R_{F}]=\emptyset$
by a similar argument as we used to get $E_{G}[V(F_{a+1}),R_{F}]=\emptyset$ for any $j\in \{a+2,\dots,b\}$.
Therefore $V(G_{1})\cap R_{F}=\{r\}$.
By Lemma \ref{lem4} and Equation (1), there exist two vertices, say $f_{1}$ and $f_{2}$, in $\bigcup_{i=1}^{b}V(F_i)$ such that $f_{1}f_{2}\not\in E(G)$.
As $G$ is $K_{p}\cup (t-1)K_{q}$-saturated, 
there is a subgraph $K_{p}\cup (t-1)K_{q}$ in $G+f_{1}f_{2}$
that is induced by the vertex set $S\cup R_{F}\cup V(F)$.
Since $|V(G_{1})\cup S|=bq+p-1$,
there are at most $b$ disjoint copies of $K_{q}$ containing vertices in $V(G_{1})$ in $G+f_1f_2$.
Therefore, there is a subgraph isomorphic to $K_{p}\cup (t-b-1)K_{q}$ in 
$G[\bigcup_{i=b+1}^{t-1}V(F_{i})\cup (R_{F}\backslash \{r\})\cup S]$,
say $M_{1}$.
Thus $\bigcup_{i=1}^{b}F_{i}\cup M_{1}$ is a subgraph isomorphic to $K_{p}\cup (t-1)K_{q}$ of $G$,
a contradiction to $G$ being $K_{p}\cup (t-1)K_{q}$-saturated.  
\end{proof}



\begin{figure}[htbp]
\begin{center}
\scalebox{0.5}[0.5]{\includegraphics{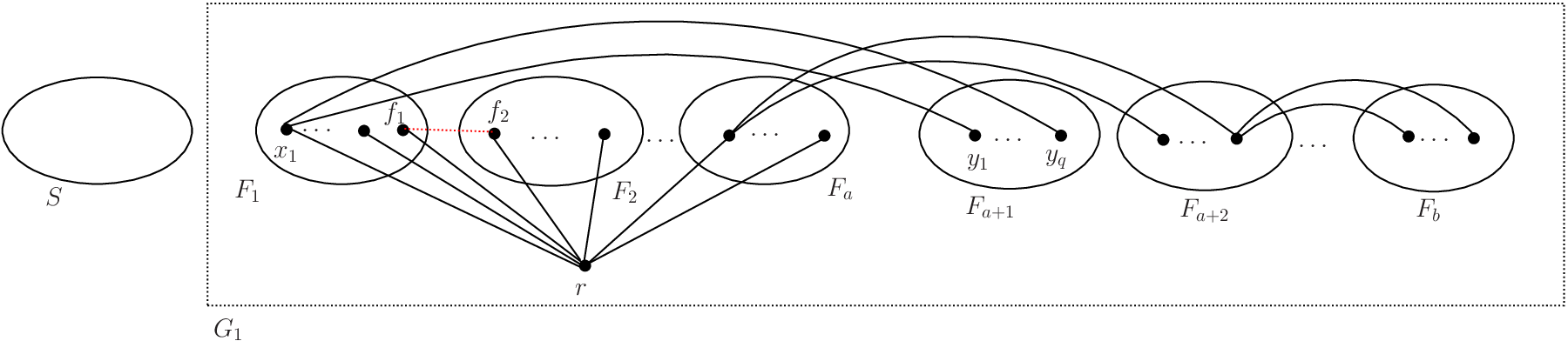}}
\caption{Illustration for Claim 4}\label{Fig.3}
\end{center}
\end{figure}

Let $r\in R_{F}$, by Claim 4, we can assume $N(r)=S\cup V(F_1)$. Then for any $u\in V(F_1)$, $G[(V(F_1)\cup\{r\})\setminus\{u\}]\cup (\bigcup_{i=2}^{t-1}F_{i})$ are $t-1$ copies of $K_q$ in $G\setminus S$. Hence by Claim 4, $N(u)=(S\cup V(F_1)\cup\{r\})\setminus \{u\}$. Therefore, we obtain an independent clique $G[V(F_{1})\cup \{r\}]$ with order $q+1$ of $G$ subject to
$\bigcup_{i=2}^{t-1}F_{i}\cup R_{F}\backslash \{r\}$. Then consider $G+vw$ for any $w\in \bigcup_{i=2}^{t-1}V(F_{i})$, 
there exists $r_1\in R_{F}$ such that $N(r_1)\cap (\bigcup_{i=2}^{t-1}V(F_{i}))\neq \emptyset$, say $N(r_1)\cap V(F_2)\neq \emptyset$. Therefore, we obtain an independent clique $G[V(F_{2})\cup \{r_1\}]$ with order $q+1$ of $G$ subject to
$F_1\cup (\bigcup_{i=3}^{t-1}F_{i})\cup R_{F}\backslash \{r_1\}$. Repeat the above proof, we can obtain that $G[V(F)\cup R_{F}]=(t-1)K_{q}$.
Recall that $G[S]\cong K_{p-2}$.
Therefore, $G\cong H(n,p,q,t)$, which contradicts to our assumption
 $G\not\cong H(n,p,q,t)$.
Hence, $Sat(n,K_{p}\cup (t-1)K_{q})=\{K_{p-2}\vee ((t-1)K_{q+1}\cup I_{n-t+3-p-q(t-1)})\}$.
$\hfill\blacksquare$

\section*{Declaration of competing interest}

This paper does not have any conflicts to disclose.



 \section*{Acknowledgements}
 
The project is supported by Beijing Natural Science Foundation (no. 1244047), China Postdoctoral Science Foundation (no. 2023M740207), the National Natural Science Foundation of China (no. 12331013, 12161141005) and the Talent Fund of Beijing Jiaotong University (no. 2024-003).

\def\polhk#1{\setbox0=\hbox{#1}{\ooalign{\hidewidth
\lower1.5ex\hbox{`}\hidewidth\crcr\unhbox0}}}

\end{document}